\documentclass[12pt]{article}
\usepackage{amsmath,amssymb,amsbsy,amsfonts,amsthm,latexsym,
amsopn,amstext,amsxtra,euscript,amscd}
\parskip10pt
\parindent0pt
\newtheorem{theorem}{Theorem}
\newtheorem{lemma}[theorem]{Lemma}

\newtheorem{cor}[theorem]{Corollary}
\newtheorem{prop}[theorem]{Proposition}

%% DEFINITIONS

 \DeclareMathOperator{\lcm}{lcm}

\def\\{\cr}
\def\({\left(}
\def\){\right)}
\def\[{\left[}
\def\]{\right]}
\def\<{\langle}
\def\>{\rangle}

\def\Z{\mathbb{Z}}

\def\notdivides{\mathrel{\kern-3pt\not\!\kern3.5pt\bigm|}}

\begin{document}

\title{On a variant of Giuga numbers}

\author{
{\sc Jos\'{e} Mar\'{i}a Grau}\\
{Departamento de Matem\'{a}ticas}\\
{Universidad de Oviedo}\\
{Avda. Calvo Sotelo, s/n, 33007 Oviedo, Spain}\\
{grau@uniovi.es}\\
\\
{\sc Florian~Luca} \\
{Instituto de Matem{\'a}ticas}\\
{ Universidad Nacional Autonoma de M{\'e}xico} \\
{C.P. 58089, Morelia, Michoac{\'a}n, M{\'e}xico} \\
{fluca@matmor.unam.mx}\\
{and}\\
{ The John Knopfmacher Centre}\\
{for Applicable Analysis and Number Theory}\\
{ University of the Witwatersrand,  P.O.\ Wits 2050, South Africa}\\
\\
{\sc Antonio M. Oller-Marc\'{e}n}\\
{Departamento de Matem\'{a}ticas}\\
{Universidad de Zaragoza}\\
{C/Pedro Cerbuna 12, 50009 Zaragoza, Spain}\\
{oller@unizar.es}}

\maketitle

\newpage
\begin{abstract}
In this paper, we characterize the odd positive integers $n$ satisfying the congruence $\sum_{j=1} ^ {n-1} j^{ \frac{n-1}{2}}\equiv 0 \pmod n$.  We show that the set of such positive integers has an asymptotic density which turns out to be slightly larger than $3/8$.
\end{abstract}

\section{Introduction}

Given any property ${\mathbf P}$ satisfied by the primes, it is natural to consider the set ${\mathcal C}_{\mathbf P}:=\{n~{\text{\rm composite}}: n~{\text{\rm satisfies}} ~{\mathbf P}\}$. Elements of ${\mathcal C}_{\mathbf P}$ can be thought of as  pseudoprimes with respect to the property ${\mathbf P}$.  Such sets of pseudoprimes have been of interest to number theorists.

Putting aside practical primality tests such as Fermat, Euler, Euler--Jacobi, Miller--Rabin, Solovay--Strassen, and others, let us have a look at some interesting, although not very efficient, primality tests as summarized in the table below.

\medskip

 \begin{tabular}{|c|c|c|c|}
 \hline
    & Test &  Pseudoprimes & Infinitely many \\ \hline
1  & $(n-1)!\equiv-1\pmod n$ & None & No \\ \hline
2  & $ a^n \equiv a\pmod n$ {\text{\rm for~all}}  $a$ &   Carmichael numbers &Yes \\ \hline
3  & $\sum_{j=1} ^ {n-1} j^{\phi(n)}\equiv -1 \pmod n$ &  Giuga numbers & Unknown  \\ \hline
4  & $\phi(n) | (n-1) $& Lehmer numbers  & No example known \\ \hline
5 &  $\sum_{j=1} ^ {n-1} j^{ n-1}\equiv -1 \pmod n $ &    & No example known  \\ \hline
\end{tabular}

\medskip

In the above table, $\phi(n)$ is the Euler function of $n$.

The first test in the table, due to Wilson and published by Waring in \cite{Wil}, is an interesting and impractical characterization of a prime number. As a consequence, no pseudoprimes for this test exist.

The pseudoprimes for the second test in the table are called Carmichael numbers. They were characterized by Korselt in \cite{Kor}. In \cite{AGP}, it is proved that there are infinitely many of them. The counting function for the Carmichael numbers was studied by Erd\H os in \cite{Erd} and by Harman in \cite{Har}.

The pseudoprimes for the third test  are called Giuga numbers. The sequence of such  numbers is sequence A007850 in OEIS. These numbers were introduced and characterized in \cite{Bor}. For example, a Giuga number is a squarefree composite integer $n$ such that $p$ divides $n/p-1$ for all prime factors $p$ of $n$.  All known Giuga numbers are even. If an odd Giuga number exists, it must be the product of at least $14$ primes.  The Giuga numbers also satisfy the congruence $nB_{\phi(n)}\equiv-1\pmod n$, where for a positive integer $m$ the notation $B_m$ stands for the $m$th Bernoulli number.

The fourth test in the table is due to Lehmer (see \cite{Leh}) and it dates back to 1932. Although it has recently drawn much attention, it is still not known whether any pseudoprimes at all exist for this test or not. In a series of papers (see \cite{Pom1}, \cite{Pom2}, and \cite{Pom3}), Pomerance has obtained upper bounds for the counting function of the Lehmer numbers, which are the pseudoprimes for this test. In his third paper \cite{Pom3}, he succeded in showing that the counting function of the Lehmer numbers $n\le x$ is $O(x^{1/2} (\log x)^{3/4})$. Refinements of the underlying method of \cite{Pom3} led to subsequent improvements in the exponent of the logarithm in the above bound by Shan \cite{Sha}, Banks and Luca \cite{BaL}, Banks, G\"ulo\u glu and Nevans \cite{BGN}, and Luca and Pomerance \cite{LP}, respectively. The best exponent to date is due to Luca and Pomerance \cite{LP} and it is $-1/2+\varepsilon$ for any $\varepsilon>0$.

The last test in the table is based on a conjecture formulated in 1959 by Giuga \cite{Giu}, which states that the set of pseudoprimes for this test is empty. In \cite{Bor}, it is shown that every counterexample to Giuga's conjecture is both a Carmichael number and a Giuga number. Luca, Pomerance and Shparlinski \cite{LPS} have showed that the counting function for these numbers $n\le x$ is $O(x^{1/2}/(\log x)^{2})$ improving slightly on a previous result by Tipu \cite{Tip}.

In this paper, inspired by Giuga's conjecture, we study the odd positive integers $n$ satisfying the congruence
\begin{equation}
\label{eq:newgiuga}
\sum_{j=1}^{n-1} j^{(n-1)/2}\equiv 0 \pmod n.
\end{equation}
It is easy to see that if $n$ is an odd prime, then $n$ satisfies the above congruence. We characterize such positive integers $n$ and show that they have an asymptotic density which turns out to be slightly larger than $3/8$.

For simplicity we put
$$
G(n)=\sum_{j=1}^{n-1} j^{\lfloor (n-1)/2\rfloor},
$$
although we study this function only for odd values of $n$.

\section{On the congruence $G(n)\equiv 0\pmod n$ for odd $n$}

We put
$$
{\mathfrak P}:=\{n~{\text{\rm odd}}: G(n)\equiv 0\pmod n\}.
$$
It is easy to observe that every odd prime lies in ${\mathfrak P}$. In fact, by Euler's criterion, if $p$ is an odd prime, then ${\displaystyle{j^{(p-1)/2}\equiv \left(\frac{j}{p}\right)\pmod p}}$, where ${\displaystyle{\left(\frac{j}{p}\right)}}$ denotes the Legendre symbol of $j$ with respect to $p$. Thus,
$$
G(p)\equiv \sum_{j=1}^{p-1} \left(\frac{j}{p}\right)\equiv 0\pmod p,
$$
so that $p\in {\mathfrak P}$.

We start by showing that numbers which are congruent to $3\pmod 4$ are in ${\mathfrak P}$.

\begin{prop} If $n\equiv 3\pmod 4$, then $n \in {\mathfrak P} $.
\end{prop}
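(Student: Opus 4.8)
The plan is to exploit the antisymmetry of the summand under the involution $j\mapsto n-j$ on $\{1,\dots,n-1\}$. The crucial arithmetic input is that when $n\equiv 3\pmod 4$ the exponent $m:=(n-1)/2$ is \emph{odd}: indeed $n-1\equiv 2\pmod 4$, so $(n-1)/2$ is an odd integer. This parity is exactly what makes the pairing argument produce cancellation rather than reinforcement.

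First I would record the key congruence. For each $j$ with $1\le j\le n-1$ we have $n-j\equiv -j\pmod n$, hence
$$
(n-j)^{(n-1)/2}\equiv(-1)^{(n-1)/2}\,j^{(n-1)/2}\equiv -\,j^{(n-1)/2}\pmod n,
$$
where the last step uses that $(n-1)/2$ is odd. Next I would reindex the defining sum $G(n)=\sum_{j=1}^{n-1} j^{(n-1)/2}$ by replacing $j$ with $n-j$; since $j\mapsto n-j$ is a bijection of $\{1,\dots,n-1\}$, this reindexing leaves the sum unchanged, giving
$$
G(n)=\sum_{j=1}^{n-1}(n-j)^{(n-1)/2}\equiv -\sum_{j=1}^{n-1} j^{(n-1)/2}=-\,G(n)\pmod n.
$$

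Combining these yields $2\,G(n)\equiv 0\pmod n$. To conclude I would invoke that $n$ is odd, so $\gcd(2,n)=1$ and $2$ is invertible modulo $n$; cancelling the factor $2$ gives $G(n)\equiv 0\pmod n$, i.e. $n\in{\mathfrak P}$, as desired.

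Honestly, I do not expect a genuine obstacle here: the statement falls out of a one-line symmetry once the parity of $(n-1)/2$ is pinned down. The only point that requires a moment's care is verifying that the involution $j\mapsto n-j$ has no fixed point in the summation range (which holds because $n$ is odd, so $n/2$ is not an integer), ensuring the reindexing is legitimate and that no stray middle term survives; and, of course, remembering that the final cancellation of $2$ is valid precisely because $n$ is odd. I would make sure to state the oddness of $(n-1)/2$ explicitly, since the entire argument collapses for $n\equiv 1\pmod 4$, where the same exponent is even and the pairing reinforces instead of cancelling.
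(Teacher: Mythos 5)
Your proof is correct and is essentially the same as the paper's: both exploit the pairing $j\leftrightarrow n-j$ together with the oddness of the exponent $(n-1)/2$ to deduce $2G(n)\equiv 0\pmod n$, then cancel the $2$ using that $n$ is odd. The paper phrases the key step via the factorization of $j^{2m+1}+(n-j)^{2m+1}$ rather than your reindexing congruence, but this is only a cosmetic difference.
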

\begin{proof}
Writing $n=4m+3$, we have that $(n-1)/2=2m+1$ is odd. Now,
\begin{eqnarray*}
2G(n) & = & \sum_{j=1}^{n-1} \left(j^{2m+1}+(n-j)^{2m+1}\right)\\
& = & n\sum_{j=1}^{n-1} \left(j^{2m}+j^{2m-1}(n-j)+\cdots+(n-j)^{2m}\right),
\end{eqnarray*}
so $n\mid 2G(n)$. Since $n$ is odd, we get that $G(n)\equiv 0\pmod n$, which is what we wanted.
\end{proof}

The next lemma is immediate.

\begin{lemma}
\label{lem:1}
Let $p$ be an odd prime and let $k\geq1$ be an integer. Then
$$\gcd\left(\frac{p^k-1}{2},\varphi(p^k)\right)=\gcd\left(\frac{p^k-1}{2},p-1\right)=\begin{cases} p-1 & \textrm{if $k$ is even,}\\
(p-1)/2 & \textrm{if $k$ is odd.}
\end{cases}$$
\end{lemma}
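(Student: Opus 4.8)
The plan is to reduce both sides to a clean divisibility computation. First I would record that $\varphi(p^k)=p^{k-1}(p-1)$ and observe that $\frac{p^k-1}{2}$ is coprime to $p$: indeed $p^k-1\equiv -1\pmod p$, and since $p$ is odd, dividing the even integer $p^k-1$ by $2$ introduces no factor of $p$. Hence $\gcd\!\left(\frac{p^k-1}{2},p^{k-1}\right)=1$, so the factor $p^{k-1}$ may be stripped from $\varphi(p^k)$ without changing the gcd (using $\gcd(a,bc)=\gcd(a,c)$ whenever $\gcd(a,b)=1$). This already yields the first equality $\gcd\!\left(\frac{p^k-1}{2},\varphi(p^k)\right)=\gcd\!\left(\frac{p^k-1}{2},p-1\right)$, leaving only the second equality to establish.

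For the second equality I would factor $p^k-1=(p-1)S$, where $S=1+p+\cdots+p^{k-1}$. The key observation is the parity of $S$: it is a sum of $k$ odd terms, so $S\equiv k\pmod 2$. I would then split into two cases according to the parity of $k$. If $k$ is odd, then $S$ is odd and $\frac{p^k-1}{2}=\frac{p-1}{2}\cdot S$, so $\frac{p-1}{2}$ divides $\frac{p^k-1}{2}$ and hence divides the gcd; on the other hand $p-1$ cannot divide $\frac{p-1}{2}\cdot S$ since that would force $S/2$ to be an integer, contradicting $S$ odd. As the gcd divides $p-1$ and is a proper multiple of $\frac{p-1}{2}$ only if it equals $p-1$, it must equal $(p-1)/2$. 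If $k$ is even, then $S$ is even and $\frac{p^k-1}{2}=(p-1)\cdot\frac{S}{2}$ is a genuine multiple of $p-1$, so $p-1$ divides the gcd; combined with the gcd dividing $p-1$, this forces it to equal $p-1$.

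There is essentially no serious obstacle here, consistent with the statement being flagged as immediate. The only point requiring a little care is the parity bookkeeping: one must track exactly where the single factor of $2$ in $p^k-1=(p-1)S$ lands (in $p-1$ or in $S$) in order to decide whether $(p-1)/2$ or the full $p-1$ survives in the gcd with $p-1$. Everything else is routine manipulation.
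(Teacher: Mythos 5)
Your proof is correct. The paper gives no proof at all (it declares the lemma ``immediate''), and your argument --- stripping the coprime factor $p^{k-1}$ from $\varphi(p^k)$ and then tracking the parity of $S=1+p+\cdots+p^{k-1}\equiv k\pmod 2$ in the factorization $p^k-1=(p-1)S$ --- is exactly the routine verification the authors have in mind.
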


With this lemma in mind we can prove the following result.

\begin{prop}
\label{prop:1}
Let $p$ be an odd prime and let $k\geq 1$ be any integer. Then, $p^k\in{\mathfrak P}$ if and only if $k$ is odd.
\end{prop}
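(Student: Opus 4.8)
The plan is to evaluate $G(p^k)=\sum_{j=1}^{p^k-1}j^{m}\bmod p^k$, where $m=(p^k-1)/2$, by separating the terms according to divisibility by $p$ and then exploiting the multiplicative structure of $(\Z/p^k\Z)^*$. First I would dispose of the terms with $p\mid j$: writing $j=pt$ gives $j^m=p^m t^m$, and since $m=(p^k-1)/2\ge k$ for every odd prime $p$ and every $k\ge 1$, each such term is $\equiv 0\pmod{p^k}$. Hence $G(p^k)\equiv\sum_j j^m\pmod{p^k}$, where $j$ now runs over a complete set of units modulo $p^k$, that is, over the cyclic group $(\Z/p^k\Z)^*$ of order $\varphi(p^k)=p^{k-1}(p-1)$.

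Next I would fix a generator $g$ of $(\Z/p^k\Z)^*$ and rewrite the unit-sum as $\sum_{i=0}^{\varphi(p^k)-1}g^{mi}$. Reindexing, the map $i\mapsto mi$ hits each element of the subgroup $H:=\langle g^m\rangle$ exactly $\gcd(m,\varphi(p^k))$ times, so $G(p^k)\equiv\gcd(m,\varphi(p^k))\cdot\sum_{h\in H}h\pmod{p^k}$, where $|H|=\varphi(p^k)/\gcd(m,\varphi(p^k))$. Here Lemma~\ref{lem:1} does the arithmetic: since $p\nmid m$ one has $\gcd(m,\varphi(p^k))=\gcd(m,p-1)$, which equals $p-1$ when $k$ is even and $(p-1)/2$ when $k$ is odd. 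Consequently $H$ has order $p^{k-1}$ when $k$ is even and order $2p^{k-1}$ when $k$ is odd.

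The crux, and the step I expect to be the main obstacle, is then to evaluate the subgroup sum $S:=\sum_{h\in H}h\bmod p^k$ and to see that its vanishing is governed exactly by the parity of $k$. The key observation is that $S\equiv 0$ as soon as $H$ contains a unit $a$ with $a\not\equiv 1\pmod p$: multiplication by $a$ permutes $H$, so $aS\equiv S$, whence $(a-1)S\equiv 0$, and $a-1$ is invertible modulo $p^k$. When $k$ is odd, $|H|=2p^{k-1}$ is even, so $H$ contains the element $-1$ of order two; since $p$ is odd, $-1\not\equiv 1\pmod p$, and the argument forces $S\equiv 0$, hence $G(p^k)\equiv 0$ and $p^k\in{\mathfrak P}$. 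When $k$ is even, $|H|=p^{k-1}$ is odd, so $H$ is the unique $p$-Sylow subgroup, namely $\{x:x\equiv 1\pmod p\}$, and no such $a$ exists; here I would instead compute $S$ directly by summing $1+tp$ over $t=0,\dots,p^{k-1}-1$, obtaining $S\equiv p^{k-1}\pmod{p^k}$. Combining this with $\gcd(m,\varphi(p^k))=p-1$ yields $G(p^k)\equiv (p-1)p^{k-1}\not\equiv 0\pmod{p^k}$, so $p^k\notin{\mathfrak P}$. Together these two cases give the claimed equivalence.
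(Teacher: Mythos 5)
Your proof is correct and follows essentially the same route as the paper: discard the multiples of $p$ using $(p^k-1)/2\ge k$, apply Lemma~\ref{lem:1} to compute $\gcd\left((p^k-1)/2,\varphi(p^k)\right)$, and evaluate the resulting sum over a cyclic subgroup, arriving at $(p-1)p^{k-1}\pmod{p^k}$ when $k$ is even. Your two local variations --- invoking $-1\in H$ (forced by $2\mid |H|$) rather than the geometric-series identity in the odd case, and summing $1+tp$ directly over the Sylow $p$-subgroup rather than expanding $\alpha^{p-1}=1+pu$ in the even case --- are clean and correct but do not change the substance of the argument.
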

\begin{proof}
Let $\alpha\in \Z$ be an integer whose class modulo $p^k$ is a generator of the unit group of $\Z/p^k\Z$.  We put $\beta:=\alpha^{(p^k-1)/2}$. Suppose first that $k$ is odd. We then claim that $\beta-1$ is not zero modulo $p$. In fact, if $\alpha^{(p^k-1)/2}\equiv 1\pmod p$, then since also $\alpha^{p-1}\equiv 1\pmod p$, we get, by Lemma \ref{lem:1}, that $\alpha^{(p-1)/2}\equiv 1\pmod p$, which is impossible.

Now, since $\beta-1$ is coprime to $p$, it is invertible modulo $p^k$. Moreover, since also $k\leq (p^k-1)/2$, we have that
\begin{eqnarray*}
G(n) & = & \sum_{j=1}^{n-1} j^{(p^k-1)/2}\equiv \sum_{\substack{\gcd(j,p)=1\\ 1\leq j\leq n-1}} j^{(p^k-1)/2}\pmod {p^k}\\
& \equiv & \sum_{j=1}^{\varphi(p^k)}\left(\alpha^{(p^k-1)/2}\right)^i\pmod {p^k}\equiv \sum_{i=1}^{\phi(p^k)} \beta^{i} \pmod {p^k}\\
& = & \frac{\beta^{\varphi(p^k)+1}-\beta}{\beta-1}\equiv 0\pmod {p^k}.
\end{eqnarray*}

Assume now that $k$ is even. Observe that
$$
(p^k-1)/2=(p-1)((1+p+\cdots+p^{k-1})/2):=(p-1)m,
$$
and $m$ is an integer which is coprime to $p$. Thus, $\beta=\alpha^{(p^k-1)/2}=(\alpha^{(p-1)})^m$ has order $p^{k-1}$ modulo $p^k$, and so does
$\alpha^{p-1}$. Moreover, again since $k\le (p^k-1)/2$, we may eliminate the multiples of $p$ from the sum defining $G(n)$ modulo $n$ and get
\begin{eqnarray}
\label{eq:kiseven}
G(n) & = & \sum_{j=1}^{n-1} j^{(p^k-1)/2}\equiv \sum_{\substack{\gcd(j,p)=1\\ 1\le j\le n-1}} j^{(p^k-1)/2}\pmod {p^k}\nonumber\\
& \equiv & \sum_{i=1}^{\varphi(p^k)}\left(\alpha^{(p^k-1)/2}\right)^i\equiv \sum_{i=1}^{p^{k-1}(p-1)}\left(\alpha^{(p-1)}\right)^{im}\pmod {p^k} \nonumber\\
& \equiv & (p-1)\sum_{i=1}^{p^{k-1}}\left(\alpha^{p-1}\right)^i\pmod {p^k}.
\end{eqnarray}
Since $\alpha^{p-1}$ has order $p^{k-1}$ modulo $p^k$, it follows that $\alpha^{p-1}=1+pu$ for some integer $u$ which is coprime to $p$. Then
\begin{equation}
\label{eq:2}
\sum_{i=1}^{p^{k-1}}\left(\alpha^{p-1}\right)^i=\alpha \left(\frac{\alpha^{p^{k-1}}-1}{\alpha-1}\right).
\end{equation}
Since $\alpha^{p^{k-1}}\equiv 1+p^k u\pmod {p^{k+1}}$, it follows that $(\alpha^{p^{k-1}}-1)/(\alpha-1)\equiv p^{k-1}\pmod {p^k}$, so that
\begin{equation}
\label{eq:3}
\alpha\left(\frac{\alpha^{p^{k-1}}-1}{\alpha-1}\right)\equiv \alpha p^{k-1}\pmod {p^k}\equiv p^{k-1}\pmod {p^k}.
\end{equation}
Calculations \eqref{eq:2} and \eqref{eq:3} together with congruences \eqref{eq:kiseven} give that $G(n)\equiv (p-1)p^{k-1}\pmod {p^k}$. Thus,
$p^k$ is not in ${\mathfrak P}$ when $k$ is even.
\end{proof}

Note that Proposition \ref{prop:1} does not extend to powers of positive integers having at least two distinct prime factors. For example,
$n=2021=43\times 47$ has the property that both $n$ and $n^2$ belong $\mathfrak{P}$.

\section{A characterization of $\mathfrak{P}$ and applications}

Here, we take a look into the arithmetic structure of the elements lying in $\mathfrak{P}$. We start with an easy but useful lemma.

\begin{lemma}
\label{lem:2}
Let $n=\prod_{p^{r_p}\| n} p^{r_p}$ be an odd integer, and let $A$ be any positive integer. If $\gcd(A,p-1)<p-1$ for all $p\mid n$, then
$$
\sum_{\substack{\gcd(j,n)=1\\1\leq j\leq n-1}}j^A\equiv 0\pmod n.
$$
\end{lemma}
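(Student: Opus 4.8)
The plan is to exploit the fact that the sum ranges over a complete set of representatives of the unit group $(\Z/n\Z)^*$, so it is left unchanged when every residue is multiplied by a fixed unit. Write $S$ for the sum on the left-hand side, regarded as an element of $\Z/n\Z$. For any integer $g$ coprime to $n$, the map $j\mapsto gj$ permutes the reduced residues modulo $n$, so $\sum_{j}(gj)^A=\sum_{j}j^A$; pulling out $g^A$ gives $g^A S\equiv S\pmod n$, that is, $(g^A-1)S\equiv 0\pmod n$. If I can exhibit a single $g$ for which $g^A-1$ is a \emph{unit} modulo $n$, then cancelling it yields $S\equiv 0\pmod n$, which is the claim.

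Everything therefore reduces to producing such a $g$, and this is exactly where the hypothesis enters. The element $g^A-1$ is invertible modulo $n$ precisely when $g^A\not\equiv 1\pmod p$ for every prime $p\mid n$ simultaneously. Fixing $p\mid n$ and taking $g$ to be a primitive root modulo $p$, the order of $g$ modulo $p$ is $p-1$, so $g^A\equiv 1\pmod p$ holds if and only if $(p-1)\mid A$. Since $\gcd(A,p-1)\le p-1$ always, the assumption $\gcd(A,p-1)<p-1$ says exactly that $(p-1)\nmid A$, and hence that at the prime $p$ there is a residue whose $A$-th power avoids $1$.

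The one point needing care is arranging all of these local choices at once. As the primes $p\mid n$ are distinct, the Chinese Remainder Theorem lets me pick an integer $g$ with $g\equiv\gamma_p\pmod p$ for a primitive root $\gamma_p$ modulo each $p\mid n$. Such a $g$ is automatically coprime to $n$ (each $\gamma_p$ is nonzero mod $p$) and satisfies $g^A\not\equiv 1\pmod p$ for every $p\mid n$. Consequently $\gcd(g^A-1,n)=1$, so $g^A-1$ is invertible modulo $n$, and the congruence $(g^A-1)S\equiv 0\pmod n$ forces $S\equiv 0\pmod n$.

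I do not anticipate a genuine obstacle; the proof is short once the invariance trick is in place. The only subtlety to watch is the distinction between $g^A-1$ being merely nonzero and being an actual unit modulo $n$: cancelling it is legitimate only because it is coprime to every prime factor of $n$, which is precisely what the assumption $\gcd(A,p-1)<p-1$ for all $p\mid n$ guarantees. (A slightly longer alternative would first reduce to a single prime power $p^{r_p}\| n$ by the Chinese Remainder Theorem and then evaluate the power sum over the cyclic group $(\Z/p^{r_p}\Z)^*$ as a geometric series, but the global multiplier argument above avoids that bookkeeping.)
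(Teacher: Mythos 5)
Your argument is correct, but it reaches the conclusion by a genuinely different route than the paper. The paper works locally: it reduces to each prime power $p^{r}\,\|\,n$, chooses a primitive root $\alpha$ of $(\Z/p^{r}\Z)^*$, and evaluates the sum of $A$-th powers of the units as the geometric series $\sum_i \beta^i$ with $\beta=\alpha^A$, which vanishes modulo $p^{r}$ because $\beta^{\phi(p^{r})}\equiv 1$ while $\beta-1$ is invertible --- the invertibility being exactly where the hypothesis $\gcd(A,p-1)<p-1$ enters. You instead run the global invariance trick: multiplication by a unit $g$ permutes the reduced residues modulo $n$, giving $(g^A-1)S\equiv 0\pmod n$, and you use the hypothesis (via the Chinese Remainder Theorem and a primitive root modulo each $p$) to produce a single $g$ for which $g^A-1$ is a unit modulo $n$, so that $S$ can be cancelled to zero. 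Both proofs rest on the same arithmetic kernel, namely that $\gcd(A,p-1)<p-1$ is equivalent to $(p-1)\nmid A$ and hence to the existence of a unit whose $A$-th power is not $1$ modulo $p$; your version buys a shorter argument that needs no reduction to prime powers, no explicit geometric sum, and no cyclicity of $(\Z/p^{r}\Z)^*$ (only of $(\Z/p\Z)^*$). What the paper's localized computation buys in exchange is that essentially the same calculation is reused in the even-$k$ case of Proposition \ref{prop:1} and in the converse direction of Theorem \ref{thm:1}, where one must determine the (nonzero) value of the sum when the gcd condition fails; your cancellation argument gives no information in that degenerate case.
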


\begin{proof}
It suffices to prove that the above congruence holds for all prime powers $p^{r_p}\| n$. So, let $p^{r}$ be such a prime power and let $\alpha$ be an
integer which is a generator of the unit group  of $\Z/p^{r}\Z$.
Put $\beta:=\alpha^A$. An argument similar to the one used in the proof of Proposition \ref{prop:1} (the case when $k$ is odd) shows that the condition
$\gcd(A,p-1)<p-1$ entails that $\beta-1$ is not a multiple of $p$. Thus, $\beta-1$ is invertible modulo $p$. We now have
\begin{eqnarray*}
\sum_{\substack{\gcd(j,n)=1\\ 1\le j\le n-1}} j^{A} & \equiv &  \left(\frac{\phi(n)}{\phi(p^{r})}\right)\sum_{\substack{\gcd(j,p)=1\\ 1\le j\le p}} j^{A}\pmod {p^r}
 \equiv  \phi(n/p^r)\sum_{i=1}^{\phi(p^r)} \alpha^{Ai}\pmod {p^r}\\
& \equiv & \phi(n/p^r)\sum_{i=1}^{\phi(p^r)} \beta^i\pmod {p^r} \equiv \phi(n/p^r)\frac{\beta^{\phi(p^r)+1}-\beta}{\beta-1}\pmod {p^r}\\
& \equiv &  0\pmod {p^r},
\end{eqnarray*}
which is what we wanted to prove.
\end{proof}

\begin{theorem}
\label{thm:1}
A positive integer $n$ is in ${\mathfrak P}$ if and only if $\gcd((n-1)/2,p-1)<p-1$ for all $p\mid n$.
\end{theorem}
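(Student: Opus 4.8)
The plan is to reduce the single divisibility $G(n)\equiv 0\pmod n$ to congruences modulo each prime power $p^r\| n$ by the Chinese Remainder Theorem, and then to evaluate a power-sum over $\Z/p^r\Z$ explicitly. Throughout write $A=(n-1)/2$, and note that the condition $\gcd(A,p-1)<p-1$ is just the statement that $(p-1)\nmid A$. First I would observe that since $A\ge 1$ we have $G(n)=\sum_{j=1}^{n-1}j^A\equiv\sum_{j=0}^{n-1}j^A\pmod n$, i.e.\ $G(n)$ is congruent to the sum of $A$-th powers over a complete residue system. Fixing $p^r\| n$ and reducing modulo $p^r$, the value $j^A\bmod p^r$ depends only on $j\bmod p^r$, and each residue class modulo $p^r$ is hit exactly $n/p^r$ times as $j$ runs through $0,\dots,n-1$. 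Hence
$$G(n)\equiv \frac{n}{p^r}\sum_{a=0}^{p^r-1}a^A\pmod{p^r},$$
where $n/p^r$ is coprime to $p$ and therefore invertible modulo $p^r$. Consequently $G(n)\equiv 0\pmod n$ if and only if $\sum_{a=0}^{p^r-1}a^A\equiv 0\pmod{p^r}$ for every $p^r\| n$.

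Next I would split this complete power-sum into the part from units and the part from multiples of $p$. The multiples of $p$ contribute $p^A\sum_{b=0}^{p^{r-1}-1}b^A$, and since $p\ge 3$ forces $A=(n-1)/2\ge (p^r-1)/2\ge r$, the factor $p^A$ is divisible by $p^r$, so this contribution vanishes modulo $p^r$. Thus everything reduces to the unit sum $C_p:=\sum_{\gcd(a,p)=1,\,0\le a<p^r}a^A$, and the theorem becomes the claim that $C_p\equiv 0\pmod{p^r}$ exactly when $(p-1)\nmid A$. The forward implication is immediate from Lemma \ref{lem:2} applied with modulus $p^r$: when $\gcd(A,p-1)<p-1$, the element $\beta=\alpha^A$ (for $\alpha$ a primitive root) satisfies $\beta\not\equiv 1\pmod p$, so $\beta-1$ is invertible and the geometric sum $\sum_{i=1}^{\phi(p^r)}\beta^i=\beta(\beta^{\phi(p^r)}-1)/(\beta-1)$ vanishes modulo $p^r$.

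The main obstacle is the converse: showing $C_p\not\equiv 0\pmod{p^r}$ when $(p-1)\mid A$. Here I would use the structure $(\Z/p^r\Z)^{\ast}\cong C_{p-1}\times H$, where $H$ is the subgroup of elements congruent to $1$ modulo $p$, of order $p^{r-1}$. Writing $A=(p-1)s$, the map $a\mapsto a^{p-1}$ carries the units onto $H$ in a $(p-1)$-to-one fashion, so $C_p=(p-1)\sum_{h\in H}h^s$. It then suffices to prove the identity $\sum_{h\in H}h^s\equiv p^{r-1}\pmod{p^r}$ for every $s\ge 1$, which gives $C_p\equiv (p-1)p^{r-1}\equiv -p^{r-1}\not\equiv 0\pmod{p^r}$, matching the value $(p-1)p^{k-1}$ already found for prime powers in Proposition \ref{prop:1}. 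To establish the identity I would first treat the case $p\nmid s$, where $h\mapsto h^s$ permutes $H$, so that $\sum_{h\in H}h=\sum_{t=0}^{p^{r-1}-1}(1+pt)\equiv p^{r-1}\pmod{p^r}$ because the leftover term $\tfrac{1}{2}p^r(p^{r-1}-1)$ is a multiple of $p^r$. The general case $s=p^{\nu}s'$ with $p\nmid s'$ then follows by the same computation applied to the subgroup of elements congruent to $1$ modulo $p^{\nu+1}$, accounting for the $p^{\nu}$-to-one collapse.

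Combining the two cases for $C_p$ across all $p^r\| n$ yields the stated equivalence. The delicate points to watch are obtaining the exact valuation $v_p(C_p)=r-1$ (not merely $C_p\equiv 0$ being false), and the careful bookkeeping of the multiples of $p^r$ that are discarded in the leftover terms of the geometric and arithmetic sums.
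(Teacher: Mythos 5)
Your proof is correct, and it reorganizes the argument in a way that differs from the paper's, even though the two key computations at its core are the same ones the paper uses. The paper proves the two directions asymmetrically: for sufficiency it partitions $\{1,\dots,n-1\}$ according to $d=\gcd(j,n)$, writes the $d$-th block as $d^{(n-1)/2}$ times a unit sum modulo $n/d$, and applies Lemma \ref{lem:2} to each block; for necessity it works modulo a single $p^r\| n$ and quotes the computation from the even-$k$ case of Proposition \ref{prop:1}. You instead make a single clean reduction valid in both directions: since $j^A\bmod p^r$ depends only on $j\bmod p^r$ and each class is hit $n/p^r$ times, $G(n)\equiv (n/p^r)\sum_{a=0}^{p^r-1}a^A\pmod{p^r}$ with $n/p^r$ a unit, the multiples of $p$ die because $A\ge r$, and the theorem becomes the exact statement that the unit power sum $C_p$ vanishes modulo $p^r$ precisely when $(p-1)\nmid A$. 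From there your two evaluations (geometric series when $\beta=\alpha^A\not\equiv 1\pmod p$; the value $(p-1)p^{r-1}$ via $\sum_{h\in H}h\equiv p^{r-1}$ when $(p-1)\mid A$) coincide in substance with Lemma \ref{lem:2} and the even-$k$ computation of Proposition \ref{prop:1}, though you evaluate $\sum_{h\in H}h$ by summing $1+pt$ directly rather than through the quotient $(\alpha^{p^{r-1}}-1)/(\alpha-1)$. What your route buys is symmetry and economy: one reduction serves both implications, and the divisor decomposition \eqref{eq:d} is not needed. What it costs is a little extra generality you never use: in the converse, $s=A/(p-1)$ is automatically coprime to $p$ (because $p\mid n$ forces $p\nmid (n-1)/2$), so your careful treatment of the case $p\mid s$ via the subgroup of elements congruent to $1$ modulo $p^{\nu+1}$, while correct, is superfluous here --- the paper notes this coprimality and avoids that case entirely.
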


\begin{proof}
Assume that $n$ is odd and $\gcd((n-1)/2,p-1)<p-1$. By Lemma \ref{lem:2},
$$
\sum_{\substack{(j,n)=1\\ 1\leq j\leq n-1}} j^{(n-1)/2}\equiv 0 \pmod n.
$$
Now, let $d$ be any divisor of $n$. Observe that
\begin{equation}
\label{eq:d}
\sum_{\substack{(j,n)=d\\ 1\leq j\leq n-1}}j^{\frac{n-1}{2}}=d^{\frac{n-1}{2}}\sum_{\substack{(i,n/d)=1\\ 1\leq i\leq n/d-1}}i^{\frac{n-1}{2}}.
\end{equation}
The last sum in the right--hand side of \eqref{eq:d} above is, by Lemma \ref{lem:2}, a multiple of $n/d$, so that the sum in the left--hand side of
\eqref{eq:d} above
is a multiple of $n$. Summing up these congruences over all possible divisors $d$ of $n$ and noting that
$$
G(n)=\sum_{d\mid n} \sum_{\substack{\gcd(j,n)=d\\ 1\le j\le n-1}} j^{(n-1)/2},
$$
we get that $G(n)\equiv 0\pmod n$, so $n\in {\mathfrak P}$.

Conversely, say $n\in {\mathfrak P}$ is some odd number and assume that there exists a prime factor $p$ of $n$ such that $p-1\mid (n-1)/2$. Write $(n-1)/2=(p-1)m$. Observe that $m$ is coprime to $p$. Assume that $p^{r}\|n$. Then, modulo $p^r$, we have
$$
G(n) =\sum_{j=1}^{n-1} j^{(n-1)/2}\equiv (n/p^r) \sum_{\substack{\gcd(j,p)=1\\ 1\le j\le p^r-1}} j^{(n-1)/2}\pmod {p^r}\equiv
(n/p^r)\sum_{\substack{\gcd(j,p)=1\\ 1\le j\le p^r-1}} j^{(p-1)}.
$$
The argument used in Proposition \ref{prop:1} (the case when $k$ is even), shows that the second sum is not zero modulo $p^r$, and since $n/p^r$ is also coprime to $p$, we get that $p^r$ does not divide $G(n)$, a contradiction.

This completes the proof of the theorem.
\end{proof}

Here are a few immediate corollaries of Theorem \ref{thm:1}.

\begin{cor}
Let $n$ be any integer. Assume that one of the following conditions hold:
\begin{itemize}
\item[i)] $\gcd\left((n-1)/2,\varphi(n)\right)$ is odd;
\item[ii)] $\gcd\left((n-1)/2,\lambda(n)\right)$ is odd, where $\lambda(n)$ the Carmichael function.
\end{itemize}
Then $n\in {\mathfrak P}$.
\end{cor}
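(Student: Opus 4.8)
The plan is to reduce both statements to the criterion of Theorem~\ref{thm:1}, which says that $n\in{\mathfrak P}$ precisely when $\gcd((n-1)/2,p-1)<p-1$ for every prime $p\mid n$. Since $(n-1)/2$ must be an integer, $n$ is odd, and hence every prime divisor $p$ of $n$ is odd, so that $p-1$ is even. This parity observation is the crux: I want to show that under either hypothesis the gcd $\gcd((n-1)/2,p-1)$ is \emph{odd}, and an odd number can never equal the even number $p-1$, forcing the strict inequality.

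First I would record the elementary divisibility facts that for every prime $p\mid n$ one has $p-1\mid\varphi(n)$ and $p-1\mid\lambda(n)$. The first holds because $\varphi(n)=\prod_{p^{r_p}\|n}p^{r_p-1}(p-1)$ carries the factor $p-1$; the second because $\lambda(n)$ is the least common multiple of the numbers $\lambda(p^{r_p})=p^{r_p-1}(p-1)$ (as $p$ is odd), each of which is divisible by $p-1$.

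Now assume (i), that $\gcd((n-1)/2,\varphi(n))$ is odd. For any prime $p\mid n$, since $p-1\mid\varphi(n)$ we have that $\gcd((n-1)/2,p-1)$ divides $\gcd((n-1)/2,\varphi(n))$, hence is a divisor of an odd number and is therefore itself odd. As $p-1$ is even, we conclude $\gcd((n-1)/2,p-1)\neq p-1$, that is, $\gcd((n-1)/2,p-1)<p-1$. Theorem~\ref{thm:1} then yields $n\in{\mathfrak P}$. The argument under (ii) is identical, using $p-1\mid\lambda(n)$ in place of $p-1\mid\varphi(n)$.

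Finally I would remark that there is essentially only one thing to prove: since $\lambda(n)\mid\varphi(n)$, hypothesis (i) implies hypothesis (ii), so it suffices to treat (ii). There is no genuine obstacle here; the only point requiring a moment's care is the twin observation that $p-1$ is even while any divisor of an odd number is odd, which is exactly what produces the strict inequality demanded by Theorem~\ref{thm:1}.
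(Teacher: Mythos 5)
Your proof is correct and is exactly the argument the paper intends: the corollary is stated as an immediate consequence of Theorem~\ref{thm:1} with no written proof, and the parity observation you make (that $\gcd((n-1)/2,p-1)$ divides the odd number $\gcd((n-1)/2,\varphi(n))$ or $\gcd((n-1)/2,\lambda(n))$, hence is odd and cannot equal the even number $p-1$) is the intended reasoning. Your closing remark that (i) implies (ii) via $\lambda(n)\mid\varphi(n)$ is also correct.
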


\begin{cor}
If $n^k\in {\mathfrak P}$ for some $k\geq 1$, then $n\in {\mathfrak P}$.
\end{cor}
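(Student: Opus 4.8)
The plan is to reduce everything to the arithmetic characterization furnished by Theorem~\ref{thm:1} and to argue by contraposition. First I would record the trivial but necessary observation that the hypothesis $n^k\in{\mathfrak P}$ forces $n^k$, and hence $n$ itself, to be odd; this is what makes the quantities $(n-1)/2$ and $(n^k-1)/2$ integers and puts us in a position to apply the theorem at all.

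Next I would restate the negation of the characterization: by Theorem~\ref{thm:1}, an odd integer $N$ fails to lie in ${\mathfrak P}$ precisely when there is a prime $p\mid N$ with $p-1\mid (N-1)/2$. Indeed, $\gcd((N-1)/2,p-1)$ always divides $p-1$, so the only way it can fail to be strictly smaller than $p-1$ is to equal $p-1$. It therefore suffices to prove the contrapositive: if $n\notin{\mathfrak P}$, then $n^k\notin{\mathfrak P}$.

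The heart of the argument is then a single divisibility. Assume $n\notin{\mathfrak P}$ and pick a prime $p\mid n$ with $p-1\mid (n-1)/2$, say $(n-1)/2=(p-1)m$. Factoring $n^k-1=(n-1)(n^{k-1}+n^{k-2}+\cdots+1)$ and dividing by $2$ gives
$$
\frac{n^k-1}{2}=\frac{n-1}{2}\bigl(n^{k-1}+\cdots+1\bigr)=(p-1)\,m\,\bigl(n^{k-1}+\cdots+1\bigr),
$$
so $p-1\mid (n^k-1)/2$. Since $p\mid n\mid n^k$, the same prime $p$ witnesses, via Theorem~\ref{thm:1}, that $n^k\notin{\mathfrak P}$, which completes the contraposition.

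I do not expect a genuine obstacle here: the entire content is that a divisor $p-1$ of $(n-1)/2$ survives the passage to $(n^k-1)/2$, and this is immediate from the factorization of $n^k-1$. The only points requiring a moment of care are checking that parity is inherited from $n^k$ down to $n$, and noting that the failure of membership is witnessed by the \emph{same} prime $p$, since the prime divisors of $n^k$ coincide with those of $n$ and no new prime need be produced.
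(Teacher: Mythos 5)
Your proposal is correct and rests on exactly the same fact as the paper's proof, namely that $(n-1)/2$ divides $(n^k-1)/2$, so that a prime $p\mid n$ with $p-1\mid (n-1)/2$ also witnesses $p-1\mid (n^k-1)/2$; the paper phrases this as $\gcd((n-1)/2,p-1)\mid\gcd((n^k-1)/2,p-1)$ and applies Theorem~\ref{thm:1} directly rather than by contraposition, but the content is identical.
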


\begin{proof}
Observe that $\gcd\left((n-1)/2,p-1\right)$ divides $\gcd\left((n^k-1)/2,p-1\right)$ for every $k$ and every prime number $p$.
Now the corollary follows from Theorem \ref{thm:1}.
\end{proof}

We add another sufficient condition which is somewhat reminiscent of the characterization of the Giuga numbers.

\begin{prop}
\label{prop:2}
Let $n=\prod_{p^{r_p}\| n} p^{r_p}$ be an odd integer. If $p-1$ does not divide $n/p^{r_p}-1$ for every prime factor $p$ of $n$, then $n\in {\mathfrak P}$.
\end{prop}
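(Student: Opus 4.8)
The plan is to deduce the proposition directly from Theorem \ref{thm:1}. That theorem says $n \in {\mathfrak P}$ precisely when $\gcd((n-1)/2, p-1) < p-1$ for every prime $p \mid n$. Since $\gcd((n-1)/2, p-1) = p-1$ is equivalent to $(p-1) \mid (n-1)/2$, the condition $n \notin {\mathfrak P}$ is the same as saying that some prime factor $p$ of $n$ satisfies $(p-1) \mid (n-1)/2$. Thus it suffices to show that the hypothesis $p-1 \nmid n/p^{r_p}-1$ for all $p \mid n$ forbids the existence of such a prime, and then Theorem \ref{thm:1} finishes the argument.

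I would argue by contraposition, one prime at a time. Fix a prime $p \mid n$ and suppose, toward a contradiction, that $(p-1) \mid (n-1)/2$. Because $n$ is odd, $(n-1)/2$ is an integer and $n-1 = 2\cdot(n-1)/2$, so this at once gives $(p-1) \mid n-1$. The key step is then a reduction modulo $p-1$: writing $n = p^{r_p}\cdot(n/p^{r_p})$ and using $p \equiv 1 \pmod{p-1}$, hence $p^{r_p} \equiv 1 \pmod{p-1}$, I get $n \equiv n/p^{r_p} \pmod{p-1}$, and therefore $n-1 \equiv n/p^{r_p}-1 \pmod{p-1}$. Combined with $(p-1)\mid n-1$, this yields $(p-1) \mid n/p^{r_p}-1$, contradicting the hypothesis. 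Hence no prime factor $p$ of $n$ can satisfy $(p-1)\mid(n-1)/2$, so $\gcd((n-1)/2,p-1) < p-1$ for all $p\mid n$, and $n \in {\mathfrak P}$ by Theorem \ref{thm:1}.

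I do not expect a serious obstacle; the whole thing is a short deduction from Theorem \ref{thm:1}. The only point that needs a little care is bookkeeping with the factor of $2$: the hypothesis controls the divisibility of $n/p^{r_p}-1$ (equivalently of $n-1$) by $p-1$, whereas Theorem \ref{thm:1} is stated in terms of $(n-1)/2$. One has to notice that the hypothesis is in fact the stronger condition, since $(p-1)\mid(n-1)/2$ already forces $(p-1)\mid n-1$, so the implication runs in the direction we need. This also explains why the condition is only sufficient and not a characterization.
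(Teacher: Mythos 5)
Your proposal is correct and follows essentially the same route as the paper: contraposition via Theorem \ref{thm:1}, passing from $(p-1)\mid (n-1)/2$ to $(p-1)\mid n-1$, and then using $p^{r_p}\equiv 1\pmod{p-1}$ to conclude $(p-1)\mid n/p^{r_p}-1$. The paper phrases the last step as $p-1$ dividing $n-p^{r_p}=p^{r_p}(n/p^{r_p}-1)$ together with $\gcd(p-1,p^{r_p})=1$, which is the same computation you carry out directly with congruences.
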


\begin{proof}
By Theorem \ref{thm:1}, if $n\not\in {\mathfrak P}$, then there exists  a prime factor $p$ of $n$ such that $p-1$ divides $(n-1)/2$. In particular, $p-1\mid n-1$. Since $p-1$ also divides $p^{r_p}-1$, it follows that $p-1$ divides $n-p^{r_p}=p^{r_p}(n/p^{r_p}-1)$. Since $p-1$ is obviously coprime to $p^{r_p}$, we get that $p-1$ divides
$n/p^{r_p}-1$, which is a contradiction.
\end{proof}

It is also easy to determine whether numbers of the form $2^m+1$ are in ${\mathfrak P}$. Indeed, assume that $2^m+1\not\in {\mathfrak P}$ for some
positive integer $m$. Then, by Theorem
\ref{thm:1}, there is some prime $p\mid 2^m+1$ such that $p-1\mid ((2^m+1)-1)/2=2^{m-1}$. Thus, $p=2^a+1$ for some $a\le m-1$, and so $p$ is a Fermat prime. In particular, $a=2^{\alpha}$ for some $\alpha\ge 0$. Since $p=2^{2^{\alpha}}+1$ is a proper divisor of $2^m+1$, it follows that $2^{\alpha}\mid m$ and $m/2^{\alpha}$ is odd. This is possible only when $2^{\alpha}$ is the exact power of $2$ in $m$ and $m$ is not a power of $2$. So, we have the following result.

\begin{prop}
Let $n=2^m+1$ and $m=2^{\alpha} m_1$ with $\alpha\ge 0$ and odd $m_1>1$. Then $n\in {\mathfrak P}$ unless $2^{2^{\alpha}}+1$ is a Fermat prime.
\end{prop}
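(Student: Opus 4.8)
The plan is to apply Theorem \ref{thm:1} directly, since for $n=2^m+1$ the relevant exponent is transparent: here $(n-1)/2=2^{m-1}$, so Theorem \ref{thm:1} asserts that $n\in{\mathfrak P}$ if and only if $\gcd(2^{m-1},p-1)<p-1$ for every prime $p\mid n$. The equality $\gcd(2^{m-1},p-1)=p-1$ holds exactly when $p-1\mid 2^{m-1}$, which forces $p-1$ to be a power of $2$, say $p=2^a+1$ with $a\le m-1$; a prime of this shape is necessarily a Fermat prime, so $a=2^{\beta}$ for some $\beta\ge 0$. Hence $n\notin{\mathfrak P}$ if and only if some Fermat prime $p=2^{2^{\beta}}+1$ with $2^{\beta}\le m-1$ divides $n$.

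The key step, which I expect to be the main obstacle, is to pin down which Fermat primes can actually divide $n=2^m+1$. For $p=2^{2^{\beta}}+1$ one has $2^{2^{\beta}}\equiv-1\pmod p$, so $\ord_p(2)=2^{\beta+1}$. Therefore $p\mid 2^m+1$ if and only if $2^m\equiv 2^{2^{\beta}}\pmod p$, i.e. $m\equiv 2^{\beta}\pmod{2^{\beta+1}}$, and this congruence holds precisely when the exact power of $2$ dividing $m$ equals $2^{\beta}$. Since $m=2^{\alpha}m_1$ with $m_1$ odd, the exact power of $2$ dividing $m$ is $2^{\alpha}$, so the only admissible value is $\beta=\alpha$. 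Consequently the one Fermat prime that can divide $n$ is $2^{2^{\alpha}}+1$, and it divides $n$ exactly when it is prime.

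It then remains to assemble the two cases. If $2^{2^{\alpha}}+1$ is not prime, no Fermat prime divides $n$, so $\gcd(2^{m-1},p-1)<p-1$ for all $p\mid n$, and Theorem \ref{thm:1} gives $n\in{\mathfrak P}$. If instead $p:=2^{2^{\alpha}}+1$ is a Fermat prime, then the hypothesis $m_1>1$ gives $2^{\alpha}<m$, so $p$ is a proper divisor of $n$; moreover $p-1=2^{2^{\alpha}}$ and $2^{\alpha}\le m-1$ (because $m=2^{\alpha}m_1\ge 3\cdot 2^{\alpha}$), whence $p-1\mid 2^{m-1}$ and $\gcd(2^{m-1},p-1)=p-1$, so $n\notin{\mathfrak P}$ by Theorem \ref{thm:1}. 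This establishes the stated dichotomy, namely that $n\in{\mathfrak P}$ unless $2^{2^{\alpha}}+1$ is a Fermat prime. I would emphasize that the condition $m_1>1$ is exactly what guarantees $2^{2^{\alpha}}+1$ is a \emph{proper} divisor of $n$: were $m$ a pure power of $2$, the number $2^{2^{\alpha}}+1$ would equal $n$ itself, and being prime it would lie in ${\mathfrak P}$ as every odd prime does.
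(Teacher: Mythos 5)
Your proof is correct and follows essentially the same route as the paper: apply Theorem \ref{thm:1} to reduce to the existence of a prime $p\mid 2^m+1$ with $p-1\mid 2^{m-1}$, observe such a $p$ must be a Fermat prime $2^{2^{\beta}}+1$, and then show that the only candidate is $\beta=\alpha$ because a Fermat prime $2^{2^{\beta}}+1$ divides $2^m+1$ exactly when $2^{\beta}$ is the exact power of $2$ in $m$. The only difference is that you justify this last divisibility fact explicitly via $\ord_p(2)=2^{\beta+1}$, whereas the paper states it without proof.
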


\section{Asymptotic density of $\mathfrak{P}$}

Let ${\mathbb{I}}$ be the set of odd positive integers. In order to compute the asymptotic density of ${\mathfrak P}$, or to even prove that it exists, it suffices to understand the elements in its complement $\mathbb{I}\backslash {\mathfrak P}$. It turns out that this is easy. For an odd prime $p$ let
$$
{\mathcal F}_p:=\{p^2 \pmod {2p(p-1)}\}.
$$
Observe that ${\mathcal F}_p\subseteq {\mathbb{I}}$.

\begin{theorem}
\label{thm:3}
We have
\begin{equation}
\label{eq:complement}
\mathbb{I}\backslash {\mathfrak P}=\bigcup_{p\ge 3} {\mathcal F}_p.
\end{equation}
\end{theorem}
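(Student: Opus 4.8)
The plan is to reduce both inclusions to a single clean equivalence: for an odd integer $n$ and an odd prime $p$,
$$
n\in{\mathcal F}_p \iff \bigl(p\mid n \ \text{and}\ (p-1)\mid (n-1)/2\bigr).
$$
Once this is in place, Theorem \ref{thm:1} finishes the job at once, since that theorem says that an odd $n$ fails to lie in ${\mathfrak P}$ precisely when some prime $p\mid n$ satisfies $\gcd((n-1)/2,p-1)=p-1$, i.e. $(p-1)\mid (n-1)/2$. Thus the right-hand side of the equivalence is exactly the defining condition for membership in $\mathbb{I}\backslash{\mathfrak P}$ witnessed by the prime $p$.

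To prove the equivalence I would first record the elementary fact that $p^2\equiv 1\pmod{2(p-1)}$ for every odd prime $p$: indeed $p^2-1=(p-1)(p+1)$ with both $p-1$ and $p+1$ even, so $2(p-1)\mid p^2-1$. Second, I would note that $\gcd(p,2(p-1))=1$, so by the Chinese Remainder Theorem the single congruence $n\equiv p^2\pmod{2p(p-1)}$ defining ${\mathcal F}_p$ is equivalent to the pair of congruences $n\equiv p^2\pmod p$ and $n\equiv p^2\pmod{2(p-1)}$. The first of these is simply $p\mid n$; the second, after substituting $p^2\equiv 1\pmod{2(p-1)}$, becomes $n\equiv 1\pmod{2(p-1)}$, that is $2(p-1)\mid n-1$, which for odd $n$ is exactly $(p-1)\mid (n-1)/2$. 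Combining the two conditions yields the displayed equivalence.

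With the equivalence established, both set inclusions are immediate. If $n\in{\mathcal F}_p$ for some odd prime $p$, then $p\mid n$ and $(p-1)\mid (n-1)/2$, so $n\notin{\mathfrak P}$ by Theorem \ref{thm:1}; hence $\bigcup_{p\ge 3}{\mathcal F}_p\subseteq\mathbb{I}\backslash{\mathfrak P}$. Conversely, if $n$ is odd and $n\notin{\mathfrak P}$, then Theorem \ref{thm:1} supplies a prime $p\mid n$ (necessarily odd, as $n$ is odd) with $(p-1)\mid (n-1)/2$, and the equivalence places $n$ in ${\mathcal F}_p$; hence $\mathbb{I}\backslash{\mathfrak P}\subseteq\bigcup_{p\ge 3}{\mathcal F}_p$.

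There is no deep obstacle here: the content is entirely congruence bookkeeping. The one point deserving care is the coprimality $\gcd(p,2(p-1))=1$, which is what lets the modulus $2p(p-1)$ split into the independent moduli $p$ and $2(p-1)$ and thereby lets CRT convert a single residue class into the conjunction of the two divisibility conditions in Theorem \ref{thm:1}. I would also verify, as already observed in the text, that ${\mathcal F}_p\subseteq\mathbb{I}$: since $p^2$ is odd and $2p(p-1)$ is even, every element of ${\mathcal F}_p$ is odd, so the union on the right of \eqref{eq:complement} genuinely sits inside $\mathbb{I}$ and the asserted equality is an equality of subsets of $\mathbb{I}$.
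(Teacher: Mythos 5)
Your proof is correct and follows essentially the same route as the paper: both reduce the claim to Theorem \ref{thm:1}, use the fact that $p^2\equiv 1\pmod{2(p-1)}$ (via $p^2-1=(p-1)(p+1)$), and split the modulus $2p(p-1)$ into the coprime pieces $p$ and $2(p-1)$. Your explicit CRT phrasing is just a slightly more systematic presentation of the paper's observation that $m$ in $n=pm$ is determined modulo $2(p-1)$ and equals the class of $p$.
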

\begin{proof}
By Theorem \ref{thm:1}, we have that $n\not\in {\mathfrak P}$ if and only if $p-1$ divides $(n-1)/2$ for some prime factor $p$ of $n$. This condition is equivalent to $n\equiv 1\pmod {2(p-1)}$. Write $n=pm$ for some positive integer $m$. Since $p$ is invertible modulo $2(p-1)$, it follows that $m$ is uniquely determined modulo $2(p-1)$. It suffices to notice that the class of $m$ modulo $2(p-1)$ is in fact $p$ since then $pm\equiv p^2\equiv 1\pmod {2(p-1)}$ with the last congruence following because
$p^2-1=(p-1)(p+1)$ is a multiple of $2(p-1)$. This completes the proof.
\end{proof}

Observe that ${\mathcal F}_p$ is an arithmetic progression of difference $1/(2p(p-1))$. Since the series
$$
\sum_{p\ge 3} \frac{1}{2p(p-1)}
$$
is convergent, it follows immediately that $\mathbb{I}\backslash {\mathfrak P}$; hence, also ${\mathfrak P}$,  has a density. This also suggests a way to compute the density of ${\mathfrak P}$ with arbitrary precision. Namely, say $\varepsilon>0$ is given. Let $3=p_1<p_2<\cdots$ be the increasing sequence of all the odd primes. Let $k:=k(\varepsilon)$ be minimal such that
$$
\sum_{j\ge k} \frac{1}{2p_j(p_j-1)}<\varepsilon.
$$
It then follows that numbers $n\not\in {\mathfrak P}$ which are divisible by a prime $p_j$ with $j\ge k$ belong to $\bigcup_{j\ge k} {\mathcal F}_{p_j}$, which is a set of density $<\varepsilon$. Thus, with an error of at most $\varepsilon$, the density of the set $\mathbb{I}\backslash {\mathfrak P}$ is the same as the density of
$$
\bigcup_{j<k} {\mathcal F}_{p_j},
$$
which is, by the Principle of Inclusion and Exclusion,
\begin{equation}
\label{eq:s}
\sum_{s\ge 1} \sum_{1\le i_1<i_2<\cdots<i_s\le k-1}\frac{\varepsilon_{i_1,i_2,\ldots,i_s}}{\lcm[2p_{i_1}(p_{i_1}-1),\ldots, 2p_{i_s}(p_{i_s}-1)]},
\end{equation}
with  the coefficient $\varepsilon_{i_1,i_2,\ldots,i_s}$ being zero if  $\bigcap_{t=1}^s {\mathcal F}_{p_{i_t}}=\emptyset$, and being $(-1)^{s-1}$ otherwise.
Taking $ \varepsilon:=0.00082$, we get that $k = 29$,
 $$
\rho(\bigcup_{j<29} {\mathcal F}_{p_j})=\frac{274510632303283394907222287246970994037}{2284268907516688397400621108446881752020}\approx 0.120174,
$$
and consequently  $\rho(\mathfrak{P})$ belongs to [0.379005, 0.379826]. So, we can say that
$$
\rho(\mathfrak{P}) = 0.379\ldots
$$
Here and in what follows, for a subset ${\mathcal A}$ of the set of positive integers we used $\rho({\mathcal A})$ for its density when it exists.

These computations were carried out with \emph{Mathematica}, for which it was necessary to have a good criterion to determine when the intersection of $\mathcal{ F}_p$ for various odd primes $p$ is empty. We devote a few words on this issue. Let us observe first that the condition $n\in {\mathcal F}_p$, which is equivalent
to the fact that $p\mid n$ and $p-1$ divides $(n-1)/2$, can be formulated as the pair congruences
\begin{eqnarray}
\label{eq:pair}
n & \equiv & 1\pmod {2(p-1)};\nonumber\\
n & \equiv & 0 \pmod p.
\end{eqnarray}
Assume now that ${\mathcal P}$ is some finite set of primes. Let us look at $\bigcap_{p\in {\mathcal P}} {\mathcal F}_p$. Put $m:=\prod_{p\in {\mathcal P}} p$. The first set of congruences \eqref{eq:pair} for all $p\in {\mathcal P}$ is equivalent to
\begin{equation}
\label{eq:cong1}
n\equiv 1  \pmod {2\lambda(m)},
\end{equation}
where $\lambda(m)=\lcm[p-1:p\in {\mathcal P}]$ is the Carmichael $\lambda$-function of $m$. The second set of congruences for $p\in {\mathcal P}$ is equivalent to
\begin{equation}
\label{eq:cong2}
n\equiv 0\pmod m.
\end{equation}
Since $1$ is not congruent to $0$ modulo any prime $q$, it follows that a necessary condition for  \eqref{eq:cong1} and \eqref{eq:cong2} to hold simultaneously is that $m$ and $2\lambda(m)$ are coprime. This is also sufficient by the Chinese Remainder Lemma in order for the pair of congruences \eqref{eq:cong1} and \eqref{eq:cong2} to have a solution $n$. Since $m$ is also squarefree, the condition that $m>1$ is odd and $m$ and $2\lambda(m)$ are coprime is equivalent to $m>2$ and $m$ and $\phi(m)$ are coprime. Put
\begin{equation}
\label{eq:M}
{\mathcal M}:=\{m>2: \gcd(m,\phi(m))=1\}.
\end{equation}
Thus, we proved the following result.

\begin{prop}
Let ${\mathcal P}$ be a finite set of primes and put $m:=\prod_{p\in {\mathcal P}} p$. Then $\bigcap_{p\in {\mathcal P}} {\mathcal F}_p$ is nonempty if and only if
$m\in {\mathcal M}$, where this set is defined at \eqref{eq:M} above. If this is the case, then the set $\bigcap_{p\in {\mathcal P}} {\mathcal F}_p$ is an arithmetic progression of difference $1/(2m\lambda(m))$.
\end{prop}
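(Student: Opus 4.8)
The plan is to translate membership in $\bigcap_{p\in{\mathcal P}}{\mathcal F}_p$ into a finite system of congruences and then to decide its solvability by the Chinese Remainder Theorem. I assume ${\mathcal P}\neq\emptyset$, so that $m$ is an odd squarefree integer with $m\geq 3$. As recorded in \eqref{eq:pair}, the condition $n\in{\mathcal F}_p$ is equivalent to the pair $n\equiv 1\pmod{2(p-1)}$ and $n\equiv 0\pmod p$; hence $n\in\bigcap_{p\in{\mathcal P}}{\mathcal F}_p$ exactly when $n$ satisfies all of these congruences simultaneously as $p$ ranges over ${\mathcal P}$.

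First I would collapse the two families of congruences separately. Since the elements of ${\mathcal P}$ are distinct primes, the congruences $n\equiv 0\pmod p$ are jointly equivalent to the single congruence \eqref{eq:cong2}, namely $n\equiv 0\pmod m$. For the second family, using the identity $\lcm[2(p-1):p\in{\mathcal P}]=2\,\lcm[p-1:p\in{\mathcal P}]=2\lambda(m)$ together with the fact that all the residues equal $1$, the congruences $n\equiv 1\pmod{2(p-1)}$ are jointly equivalent to \eqref{eq:cong1}, namely $n\equiv 1\pmod{2\lambda(m)}$. Thus the entire system reduces to the two congruences \eqref{eq:cong1} and \eqref{eq:cong2}.

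Next I would invoke the solvability criterion for two congruences: $n\equiv 1\pmod{2\lambda(m)}$ and $n\equiv 0\pmod m$ have a common solution if and only if $\gcd(m,2\lambda(m))\mid 1$, that is, if and only if $\gcd(m,2\lambda(m))=1$. It then remains to identify this coprimality with the condition $m\in{\mathcal M}$. The key observation is that $\lambda(m)=\lcm[p-1:p\mid m]$ and $\phi(m)=\prod_{p\mid m}(p-1)$ have the same set of prime divisors, so that $\gcd(m,\lambda(m))=1$ if and only if $\gcd(m,\phi(m))=1$; and since $m$ is odd we have $\gcd(m,2)=1$, whence $\gcd(m,2\lambda(m))=\gcd(m,\lambda(m))$. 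As $m\geq 3>2$, these combine to give $\gcd(m,2\lambda(m))=1$ if and only if $m>2$ and $\gcd(m,\phi(m))=1$, i.e. $m\in{\mathcal M}$, which establishes the nonemptiness criterion.

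Finally, when the intersection is nonempty, the coprimality $\gcd(m,2\lambda(m))=1$ yields $\lcm[2\lambda(m),m]=2m\lambda(m)$, so by the Chinese Remainder Theorem the solution set is a single residue class modulo $2m\lambda(m)$; that is, $\bigcap_{p\in{\mathcal P}}{\mathcal F}_p$ is an arithmetic progression of difference $1/(2m\lambda(m))$, consistent with the single-prime case $m=p$, where $2m\lambda(m)=2p(p-1)$. The only genuinely delicate points are the solvability criterion, where one must check that the failure of coprimality actually obstructs a common solution rather than merely that coprimality suffices, and the passage from $\gcd(m,2\lambda(m))$ to $\gcd(m,\phi(m))$, which relies on $m$ being odd and on $\lambda(m)$ and $\phi(m)$ sharing the same prime divisors; both are routine but deserve to be stated with care.
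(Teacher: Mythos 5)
Your proof is correct and follows essentially the same route as the paper: reduce membership in the intersection to the pair of congruences \eqref{eq:cong1} and \eqref{eq:cong2}, observe that a common prime factor of $m$ and $2\lambda(m)$ obstructs solvability while coprimality suffices by the Chinese Remainder Theorem, and translate $\gcd(m,2\lambda(m))=1$ into $m\in{\mathcal M}$ using that $m$ is odd and squarefree and that $\lambda(m)$ and $\phi(m)$ share the same prime divisors. The only difference is that you spell out a couple of routine verifications the paper leaves implicit.
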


The condition that $m\in {\mathcal M}$ can also be formulated by saying that $m$ is odd, squarefree and $p\nmid q-1$ for all primes $p$ and $q$ dividing $m$.
We recall that the set ${\mathcal M}$ has been studied intensively in the literature. For example, putting ${\mathcal M}(x)={\mathcal M}\cap [1,x]$, Erd\H os \cite{Erd1} proved that
$$
\#{\mathcal M}(x)=e^{-\gamma} (1+o(1))\frac{x}{\log\log\log x}\qquad {\text{\rm as}}\quad x\to \infty.
$$
In particular, it follows that if ${\mathcal P}$ is a finite set of primes, then $\bigcap_{p\in  {\mathcal P}} {\mathcal F}_p\ne \emptyset$ if and only if ${\mathcal F}_p\bigcap {\mathcal F}_q\ne \emptyset$ for any two elements $p$ and $q$ of ${\mathcal P}$.

Finally, let us observe that with this formalism and the Principle of Inclusion and Exclusion, as in \eqref{eq:s} for example,  we can write that
$$
\rho({\mathfrak P})=\sum_{m\in {\mathcal M}\cup \{1\}} \frac{(-1)^{\omega(m)}}{2m\lambda(m)}.
$$
Here, $\omega(m)$ is the number of distinct prime factors of $m$. The fact that the above series converges absolutely follows easily from the inequality $\lambda(m)>(\log m)^{c\log\log\log m}$ which holds with some positive constant $c$ for all sufficiently large $m$ (see \cite{EPS}), as well the fact that the series
$$
\sum_{m\ge 2} \frac{1}{m(\log m)^2}
$$
converges. We give no further details.

\end{document}